\documentclass[12pt,twoside]{article}
\usepackage{amsmath,amssymb}
\usepackage{graphicx}

\allowdisplaybreaks[4]

\newtheorem{lm}{Lemma}[section]
\newtheorem{thm}{Theorem}[section]
\newtheorem{rmk}{Remark}[section]

\newcounter{saveeqn}%

\makeatletter
\evensidemargin
\oddsidemargin
\makeatother

\title{\Large\bf
Asymptotic expansions of characteristic orbits of planar real analytic vector fields
\thanks{Supported by NSFC \#12101087 and NSFSPC \#2024NSFSC1400.}
}

\author{
{\sc Jun Zhang}
\footnote{Corresponding Author. E-mail address: mathzhangjun@163.com}
\\
$^a${\small School of Mathematical Sciences \& Sichuan Geomath Key Lab}\\
{\small Chengdu University of Technology, Sichuan 610059, P. R. China}
}
\date{}

\begin{document}
\maketitle

\begin{abstract}
The well-known Newton-Puiseux Theorem states that
each real branch of a planar real analytic curve admits a Puiseux expansion.
We generalize this result to characteristic orbit of an isolated singularity of
a planar real analytic vector field and prove that
each characteristic orbit has a `power-log' expansion.

\vskip 0.2cm

{\bf Keywords:}
characteristic orbit, asymptotic expansion, power-log expansion.


\end{abstract}

\baselineskip 15pt   
\parskip 10pt         

\thispagestyle{empty}
\setcounter{page}{1}

\section{Introduction and main result}
\setcounter{equation}{0}
\setcounter{lm}{0}
\setcounter{thm}{0}
\setcounter{rmk}{0}
\setcounter{df}{0}
\setcounter{cor}{0}
\setcounter{pro}{0}

The well-known Newton-Puiseux Theorem (\cite[Theorem 4.2.7]{KP02}) states that
each real branch of a planar real analytic curve locally has a Puiseux expansion
(given by a convergent fractional power series) no matter what analytic coordinates are chosen.

In this paper,
we consider a planar real analytic vector field ${\cal V}$ near a point $P$.
When $P$ is a singular point, we always assume that it is isolated.
As indicated in \cite[p.79, Theorem~3.10]{ZZF} or \cite[p.73, Lemma~1]{SC},
if there is an orbit connecting with $P$,
then it connects with $P$ either spirally or in a fixed direction.
The orbit in the latter case is referred to as a \textit{characteristic orbit}.
Let $(x,y)$ be an analytic coordinates near $P$ and
assume that under which
\begin{align}
{\cal V}=X(x,y)\partial_x+Y(x,y)\partial_y,
\label{equ:XY}
\end{align}
where $X$ and $Y$ are analytic functions at the origin $O:(0,0)$.
Suppose that the vector field ${\cal V}$ has a characteristic orbit $\Gamma$.
Then,
the orbit $\Gamma$ can be viewed as the image of a function $y=y(x)$
for small $x\ge 0$ or $x\le 0$
unless $\Gamma$ lies on the $y$-axis.
Actually,
the Newton-Puiseux Theorem (\cite[Theorem 4.2.7]{KP02}) implies that there are finitely many
vertical isoclines and horizontal ones,
determined by equations $X(x,y)=0$ and $Y(x,y)=0$, respectively.
Since a non-zero function given by a Puiseux series is locally monotonous,
each isocline is either an orbit lying on the axes or without contact.
Moreover,
those isoclines divide a small neighborhood of the origin $O$
into finitely many disjoint subregions,
in each of which both $X(x,y)$ and $Y(x,y)$ have definite signs.
It follows that
the characteristic orbit $\Gamma$ either lies on the $y$-axis or
is the image of a function $y=y(x)$ for small $x\ge 0$ or $x\le 0$.
So similar to the Puiseux expansion of real branches of a planar analytic curve,
we are interested in the asymptotic expansions
(independent of the choice of analytic coordinates)
of characteristic orbits.
Since each characteristic orbit is an invariant manifold,
this problem is naturally related to the smoothness of invariant manifold.
It is also helpful to construct a generalized normal sector \cite{T-Z}
when investigating orbit near an exceptional direction of a singularity.
Moreover,
asymptotic expansions of characteristic orbits can be employed
to compute the box dimension of orbits on separatrices generated by the unit time map,
which leads to a fractal classification of singularities, see \cite{DHVZ}.

When $O$ is non-singular,
the Cauchy's Theorem (\cite[p.34, Theorem~8.1]{CL87}) ensures that
there is a unique characteristic orbit connecting with $O$ and
it has an analytic parametrization $y=y(x)$ under some appropriate analytic coordinates.
Then
Puiseux expansion is the analytic coordinates free
asymptotic expansion of the characteristic orbit
in this non-singular case.
However,
things get complicated when $O$ is a singular point.
We first consider the hyperbolic cases:
\begin{description}
  \item[(H1)]
  When $O$ is a hyperbolic focus, there are no characteristic orbits.

  \item[(H2)]
  When $O$ is a hyperbolic saddle, then characteristic orbits all lie
  on the analytic stable manifold and unstable manifold.
  So the analytic coordinates free asymptotic expansion is a Puiseux series.

  \item[(H3)]
  When $O$ is a hyperbolic node,
  consider the analytic vector field
  \begin{align*}
  x \partial_x+(\lambda y+\cdots) \partial_y,
  \end{align*}
  where $\lambda>0$ and $\cdots$ represents higher order terms.
  Picard (\cite{P78,P84}) and Poincar\'e (\cite{Poincare}) proved that
  each characteristic orbit (not lying on the $y$-aixs) of the above vector field
  is a convergent series in $x$ and $x^\lambda$ for non-integer $\lambda$ but
  a convergent series in $x$ and $x\ln x$ for integer $\lambda$.
  However, this expansion is not an analytic coordinates free expansion.
\end{description}
Next,
we consider those non-hyperbolic cases.
Clearly,
there is no need to consider center-type singularities
since no characteristic orbit exists.
For a semi-hyperbolic singularity (exact one zero eigenvalue),
Theorem~2.19 given in \cite{DLA} ensures that
each characteristic orbit either lies on the unique strong (un)-stable manifold,
which is $C^\omega$,
or a center manifold, which is $C^\infty$.
Then the analytic coordinates free expansion is either a convergent or formal Puiseux series.
For nilpotent singularities and full-null ones,
i.e., the Jacobian matrices of which are nilpotent and zero matrices respectively,
Brezovskaya \cite{Berezov} used Newton polygon to determine
the principle part of the asymptotic expansion of a characteristic orbit.
However, general expansions remain to be answered.
Consequently,
we need to determine analytic coordinates free asymptotic expansions for
hyperbolic nodes, nilpotent singularities and full-null singularities.
Such expansions beyond Puiseux series but transseries will be involved
as shown in \cite{P78,P84,Poincare}.

Transseries plays an important role in the study of dynamical systems.
Let ${\cal P}$ be a hyperbolic monodromic polycycle of a planar analytic vector field,
and consider a transversal section $\Sigma$ for the polycycle ${\cal P}$
with a local coordinate $x$ along the section
whose origin is exact the intersection of $\Sigma$ and ${\cal P}$.
Then the Poincar\'e return map $D$ and
the return time function $T$ can be defined on $\Sigma$.
Dulac \cite{Dul}, \'{E}calle \cite{Ecalle} and Il'yashenko \cite{Il90,Il91} proved that
the Poincar\'e return map $D(x)$ has at the origin the following asymptotic expansion
\begin{align*}
\widehat{D}(x)=c_0x^{\lambda_0}+\sum_{i\ge 1} x^{\lambda_i}P_i(\ln x),~~~x>0,
\end{align*}
where $c_0>0$,
$(\lambda_0,\lambda_1,\cdots)$ is a strictly increasing sequence of positive
real numbers tending to infinity and each $P_i$ is a polynomial with real coefficients.
On the other hand,
Saavedra \cite{MS06} proved that
return time function $T(x)$ has the following asymptotic expansion
\begin{align*}
\widehat{T}(x)=\sum_{i\ge 0} x^{\lambda_i}P_i(\ln x),~~~x>0,
\end{align*}
which is similar to the above expansion of $D(x)$
but allows negative powers and non-constant leading coefficient $P_0$.
However,
iterated logrithms and exponentials will be involved for non-hyperbolic polycycles.
Recently,
normal form and classification of transseries have been considered, see \cite{MR21, MRRZ, PRRS}.

Let $\mathbb{R}[\![x]\!]$ (and $\mathbb{R}\{x\}$) be 
the ring of formal (convergent) power series in $x$ with coefficients in $\mathbb{R}$. 
For asymptotic expansions of characteristic orbits of a planar analytic vector field,
we obtain the following result.

\begin{thm}
Suppose that $y=y(x)$ for small $x>0$ is a characteristic orbit of vector field \eqref{equ:XY}.
Then either
\begin{description}

\item[(i)]
$y(x)\in\mathbb{R}[\![x^{\frac{1}{n}}]\!]$ for some $n\in\mathbb{N}_+$, or

\item[(ii)]
$y(x)=\sum_{i\ge 1} c_i x^{\lambda_i}$,
where $(\lambda_1,\lambda_2,\cdots)$ is a strictly increasing sequence of positive
real numbers tending to infinity
, or

\item[(iii)]
$y(x)=\sum_{i+j\ge 1} x^{\frac{i}{n}}{\boldsymbol{\ell}}^{\frac{j}{n}}
P_{ij}(\ln x,\ln \boldsymbol{\ell})$,
where $\boldsymbol{\ell}:=\frac{-1}{\ln x}$ and
$P_{ij}$\,s are polynomials.
\end{description}
\label{thm:O}
\end{thm}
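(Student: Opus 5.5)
The plan is to reduce the problem, by a finite sequence of blow-ups (Newton polygon / quasi-homogeneous directional blow-ups in the sense of Brezovskaya, or polar/weighted blow-ups), to the situation where the characteristic orbit $\Gamma$ tends to an elementary singularity (hyperbolic or semi-hyperbolic) or to a regular point of the transformed field. Since the blow-up maps are monomial in $x$ and $y$ (up to translations $y\mapsto x^{p/q}(c+\tilde y)$ coming from the edges of the Newton polygon), each step replaces $y(x)$ by $x^{p/q}(c+\tilde y(x^{1/q}))$ after the substitution $x=u^{q}$. Such a substitution preserves each of the three classes (i)--(iii): a series in $x^{1/n}$, a series in increasing real powers, and a series in $x^{i/n}\boldsymbol{\ell}^{j/n}$ with polynomial coefficients in $\ln x,\ln\boldsymbol{\ell}$. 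The one check needed is that $\ln u=\frac1q\ln x$ and $\boldsymbol{\ell}(u)=q\boldsymbol{\ell}(x)$, which are absorbed into the polynomials and the constants. The first key step is therefore a finiteness statement: after finitely many such steps, the strict transform of $\Gamma$ reaches a point where the linear part is nonzero. I would obtain this from Seidenberg-type desingularization along the single branch followed by $\Gamma$.

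The second step is the case analysis at the final point, in local analytic coordinates $(u,v)$ with $\Gamma$ a graph $v=v(u)$.
\begin{description}
\item[Regular point.] By Cauchy's theorem, $\Gamma$ lies on an analytic curve, and Puiseux gives class (i).
\item[Saddle or strong manifold of a semi-hyperbolic point.] As in (H2) and the result cited from \cite{DLA}, $\Gamma$ lies on an analytic invariant curve, again giving (i).
\item[Center manifold of a semi-hyperbolic point.] The center manifold is $C^\infty$, and its formal expansion is a (formal) Puiseux series; this gives (i) in the formal sense $\mathbb{R}[\![x^{1/n}]\!]$.
\item[Hyperbolic node.] Poincar\'e--Picard give an expansion in $u$ and $u^\lambda$, i.e.\ class (ii), for $\lambda\notin\mathbb{Q}$. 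For rational $\lambda$ the expansion reduces to (i) or to a series in $u^{1/n}$ and $u^{k}\ln u$.
\end{description}

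The third step is the logarithmic case, which I would organize as follows. When the node is resonant (or when the final point is a saddle-node whose center direction must be followed transversally), orbits have expansions in $u^{1/n}$ and $\ln u$. The $\boldsymbol{\ell}$ terms arise when the blow-up chain itself introduces a logarithmic variable. Typically, at a degenerate step where the weighted blow-up produces a saddle-node whose normal form is $u^{k+1}\partial_u+\dots$, the orbit in the blown-up chart behaves like $1/\ln$ of the original variable. I would use the formal normal form of a saddle-node, $u^{k+1}\partial_u+v(1+a u^{k})\partial_v$, together with the hyperbolic-node normal form, and substitute back through the monomial maps. This yields terms $x^{i/n}\boldsymbol{\ell}^{j/n}$ with coefficients polynomial in $\ln x$ and $\ln\boldsymbol{\ell}$, which is class (iii). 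Composing through the earlier steps preserves the form by the closure properties of the first paragraph.

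The main obstacle is this last composition. I must show that the set of series of type (iii) is closed under the substitutions produced by the blow-up chain, and that the series obtained at a resonant or saddle-node step, once pulled back, has only finitely many $\ln$ and $\ln\boldsymbol{\ell}$ powers per monomial $x^{i/n}\boldsymbol{\ell}^{j/n}$. I would handle this by an ordering argument. Give $x^{i/n}\boldsymbol{\ell}^{j/n}$ the lexicographic weight $(i,j)$. Then check, step by step along the finite chain, that each coefficient is determined by finitely many lower-weight coefficients through a triangular linear system. That system's solvability fails only at resonances, and each resonance adds one power of a logarithm; this gives the polynomial dependence of the $P_{ij}$.
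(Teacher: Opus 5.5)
Your overall strategy matches the paper's: reduce along $\Gamma$ to a regular or elementary point, use normal forms there, and pull back. But the step that actually produces case (iii) is missing, and what you put in its place is incorrect.

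You assert that a resonant node yields only series in $u^{1/n}$ and $u^k\ln u$. You also assert that $\boldsymbol{\ell}$ ``arises when the blow-up chain itself introduces a logarithmic variable.'' Both claims fail.
\begin{itemize}
\item Your own first paragraph shows that the substitutions $x=u^q$ only rescale $\boldsymbol{\ell}$ by constants, so the chain cannot create it.
\item An elementary point already produces it. For $\dot x=2x+by^2$, $\dot y=y$ with $b<0$, the orbits are $x=y^2(c+b\ln y)$. Solving for $y$ gives $y(x)=\sqrt{2/|b|}\,x^{1/2}\boldsymbol{\ell}^{1/2}(1+o(1))$, with no blow-up at all.
\end{itemize}
The reason is that the node's normal form gives the orbit as a graph over the weak coordinate. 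When the variable you need as independent variable vanishes to order $\ge p$ along the weak axis, the relation between the two has a logarithmic leading coefficient, and that coefficient must be divided out.

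This is exactly what the paper does:
\begin{itemize}
\item it parametrizes $\Gamma$ by the normal-form coordinate $u=x_m$;
\item it blows down to $x=\mathcal{X}(u)$, $y=\mathcal{Y}(u)$ (Lemma~\ref{lm:par});
\item it shows that $\mathcal{X}$ may have leading term $u^kP_k(\ln u)$ with $\deg P_k=\rho_k\ge 1$ (case \textbf{(C4)} of Lemma~\ref{lm:XC1-4});
\item it inverts this in Lemma~\ref{lm:inverse} via $x=X^k$, $u=X(-\ln X)^{-\rho_k/k}(u_0+Z)$ and the implicit function theorem.
\end{itemize}
That inversion is where $\boldsymbol{\ell}^{j/n}$ and $\ln\boldsymbol{\ell}$ come from. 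Your closing lexicographic argument cannot replace it. A triangular system with constant diagonal, even with resonances, generates nonnegative powers of $\ln x$, never a factor $(-\ln x)^{-\rho_k/k}$.

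There are two smaller problems.
\begin{itemize}
\item Working only in $x$-charts $y=x^{p/q}(c+\tilde y)$ presupposes that at every stage $\Gamma$ lands at a finite point of that chart. If instead it leaves through the corner of the other chart ($x=w^q\tilde x$, $y=w^p$; compare the paper's $\mathcal{B}_h$ steps), you must express $w$ in terms of $x$, which is again an inversion of the kind above.
\item Your center-manifold bullet contradicts your own Step~3 remark about saddle-nodes. For $\dot x=-x$, $\dot y=y^2$, the orbits in $\{x>0,\,y<0\}$ are $y=1/(\ln x-\ln C)=-\boldsymbol{\ell}/(1+\boldsymbol{\ell}\ln C)$. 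This is a power series in $\boldsymbol{\ell}$, i.e.\ case (iii) with $i=0$, not a Puiseux series.
\end{itemize}
The paper also handles the center-manifold case only through a formal straightening, so on that sub-case you are not behind it. The essential ingredient your plan lacks is the inversion of parametrizations whose leading term carries a power of $\ln u$.
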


\section{Proof of the main result}
\setcounter{equation}{0}
\setcounter{lm}{0}
\setcounter{thm}{0}
\setcounter{rmk}{0}
\setcounter{df}{0}
\setcounter{cor}{0}
\setcounter{pro}{0}

By the theorem of desingularization (\cite[Theorem~3.3]{DLA}),
an isolated singularity of the analytic vector field ${\cal V}$ can be decomposed into
a finite number of elementary singularities
(having at least one non-zero eigenvalue).
As indicated in the introduction,
for elementary singularities,
the only unknown case for asymptotic expansions is the hyperbolic node case,
which involves resonance.
According to normal form theory (\cite[Chapter~2]{DLA}),
under an appropriate analytic coordinates and a non-zero constant being eliminated,
a vector field with a hyperbolic node at the origin is one of the following forms
\begin{equation*}
x \partial_x+y \partial_y,~~~
x \partial_x+(x+y) \partial_y,~~~
x \partial_x+\lambda y \partial_y,~~~
x \partial_x+(py+b x^p) \partial_y,
\end{equation*}
where $\lambda\in\mathbb{R}_+\!\setminus\!\mathbb{N}$,
$p\in\mathbb{N}_+\backslash\{1\}$ and $b\in\mathbb{R}$.
Clearly,
those characteristic orbits not lying on the $y$-axis of the above vector fields
are given by
\begin{equation}
y=cx,~~~
y=x(c+\ln|x|),~~~
y=c|x|^\lambda,~~~
y=x^p(c+b\ln|x|),
\label{node-phi}
\end{equation}
respectively, where $c\in\mathbb{R}$.

\subsection{Desingularization}

According to the desingularization process (\cite[Chapter~3]{DLA}),
there exist local coordinates $(x_k,y_k)$ for $k=0, 1, ..., m-1$ satisfying that
\begin{align}
(x_{m-1},y_{m-1})\stackrel{\pi_{m-1}}{\longrightarrow}(x_{m-2},y_{m-2})
\stackrel{\pi_{m-2}}{\longrightarrow}
\cdots
\stackrel{\pi_2}{\longrightarrow}
(x_1,y_1)\stackrel{\pi_1}{\longrightarrow}(x_0,y_0)=(x,y),
\end{align}
where $\pi_k\in\{{\cal B}_v,{\cal B}_h,{\cal T}_v,{\cal T}_h\}$ with
\begin{equation*}
\begin{aligned}
&
{\cal B}_v:
\left\{
\begin{array}{llll}
x_{k-1}=x_k,
\\
y_{k-1}=x_ky_k,
\end{array}
\right.
&&
{\cal B}_h:
\left\{
\begin{array}{llll}
x_{k-1}=y_k x_k,
\\
y_{k-1}=y_k,
\end{array}
\right.
\\
&
{\cal T}_v:
\left\{
\begin{array}{llll}
x_{k-1}=x_k,
\\
y_{k-1}=y_k+y_k^*,
\end{array}
\right.
&&
{\cal T}_h:
\left\{
\begin{array}{llll}
x_{k-1}=x_k+x_k^*,
\\
y_{k-1}=y_k
\end{array}
\right.
\end{aligned}
\end{equation*}
and neither $x_k^*$ nor $y_k^*$ is zero;
and that for $k=0,1,...,m-2$
the map $\pi_{k+1}^{-1}$ brings
the characteristic orbit $\Gamma_k$ of the vector field ${\cal V}_k$
connecting with the point $P_k$ into
the characteristic orbit $\Gamma_{k+1}$ of the vector field ${\cal V}_{k+1}$
connecting with the point $P_{k+1}$,
as depicted by Fig.~\ref{DeGamma},
where
$P_k$ is either the point $(0,0)$, $(0,y^*_k)$, or $(x^*_k,0)$,
$P_{k+1}$ is either the point $(0,0)$, $(0,y^*_{k+1})$, or $(x^*_{k+1},0)$,
and ${\cal V}_{k+1}$ is the pull back of ${\cal V}_k$
(some common factor could be eliminated).
Note that if $\pi_k={\cal T}_v$ (resp. ${\cal T}_h$)
then $\pi_{k-1}={\cal B}_v$ (resp. ${\cal B}_h$).
We can assume without loss of generality that
the characteristic orbit $\Gamma_{m-1}$ connects with
the origin $O_{m-1}:(0,0)$ in the $(x_{m-1},y_{m-1})$-plane;
otherwise, a more translation ${\cal T}_v$ or ${\cal T}_h$ can be performed.
Then the point $O_{m-1}$ is either nonsingular or
an elementary singularity of the vector field ${\cal V}_{m-1}$.

\begin{figure}[h]
  \centering
  \includegraphics[height=2.2in]{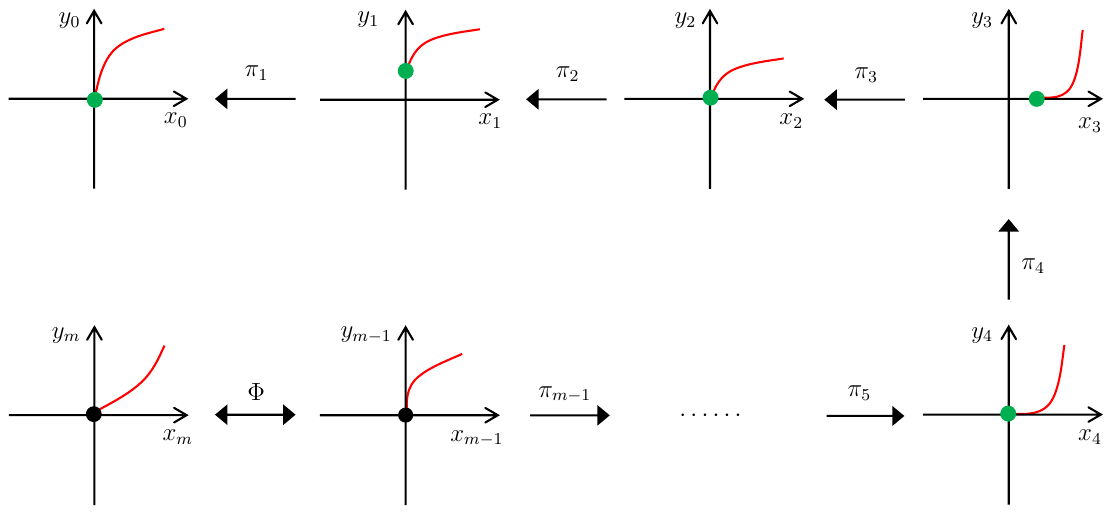}\\
  \caption{Desingularization process along characteristic orbit $\Gamma$.}
  \label{DeGamma}
\end{figure}

When $O_{m-1}$ is either nonsingular or a hyperbolic saddle,
there is an analytic coordinates change
\begin{align}
\Phi:
\left(
\begin{array}{cc}
x_m
\\
y_m
\end{array}
\right)
\to
\left(
\begin{array}{cc}
x_{m-1}
\\
y_{m-1}
\end{array}
\right)
=
\left(
\begin{array}{cc}
\phi_1(x_m,y_m)
\\
\phi_2(x_m,y_m)
\end{array}
\right)
\label{phi-n}
\end{align}
preserving the origin and straightening the characteristic orbit $\Gamma_{m-1}$
into the positive $x_m$-axis.

When $O_{m-1}$ is a hyperbolic node,
we see from \eqref{node-phi} that
under an analytic coordinates change \eqref{phi-n},
the orbit $\Gamma_{m-1}$ becomes one of the following three curves
$$
y_m=0,~~~y_m=cx_m^\lambda~~~\mbox{and}~~~y_m=x_m^p(c+b\ln x_m)
$$
for $x_m\ge 0$,
where $b, c\in\mathbb{R}$, $\lambda\in\mathbb{R}_+\!\setminus\!\mathbb{N}$
and $p\in\mathbb{N}_+$.

When $O_{m-1}$ is semi-hyperbolic,
if $\Gamma_{m-1}$ lies on the strong (un)-stable manifold,
then the orbit $\Gamma_{m-1}$ can be straightened into the positive $x_m$-axis
by an analytic coordinates change \eqref{phi-n} as above.
On the contrary,
if $\Gamma_{m-1}$ lies on a center manifold, which is $C^\infty$,
then there is a formal coordinates change \eqref{phi-n}
straightening the orbit $\Gamma_{m-1}$ into the positive $x_m$-axis.

As a consequence of the above three cases,
an analytic or formal coordinates change \eqref{phi-n} brings
the orbit $\Gamma_{m-1}$ into the curve
\begin{align}
y_m=\varphi(x_m)=0
\label{ynxn1}
\end{align}
for $x_m\ge 0$,
or an analytic coordinates change \eqref{phi-n} brings
the orbit $\Gamma_{m-1}$ into the curve
\begin{align}
&y_m=\varphi(x_m)=cx_m^\lambda,~~~\mbox{or}
\label{ynxn2}
\\
&y_m=\varphi(x_m)=x_m^p(c+b\ln x_m)
\label{ynxn3}
\end{align}
for $x_m\ge 0$,
where $b, c\in\mathbb{R}$, $\lambda\in\mathbb{R}_+\!\setminus\!\mathbb{N}$ and $p\in\mathbb{N}_+$.
After blowing down,
we obtain in the following lemma
the parametric representation of the characteristic orbit $\Gamma_0=\Gamma$.

\begin{lm}
The characteristic orbit $\Gamma$ of vector field \eqref{equ:XY} admits
the following parametric representation
\begin{equation}
\left\{
\begin{array}{llll}
x={\cal X}(x_m)
:=\alpha_{r_1,s_1} T_1^{r_1} T_2^{s_1}
+\sum_{i> r_1,j> s_1} \alpha_{i,j} T_1^i T_2^j,
\\
y={\cal Y}(x_m)
:=\beta_{r_2,s_2} T_1^{r_2} T_2^{s_2}+\sum_{i> r_2,j> s_2} \beta_{i,j} T_1^i T_2^j,
\end{array}
\right.
\label{xxnyyn}
\end{equation}
polynomials in $T_1$ and $T_2$,
where
$r_i$ and $s_i$ are non-negative integers with $r_i+s_i\ge 1$,
$T_i:=\phi_i(x_m,\varphi(x_m))$ with $\phi_i$ given in \eqref{phi-n} and
$\varphi$ is one of the forms \eqref{ynxn1}-\eqref{ynxn3} for $i=1,2$,
and $\alpha_{r_1,s_1}\beta_{r_2,s_2}\ne 0$.
\label{lm:par}
\end{lm}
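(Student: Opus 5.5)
The plan is to compose the maps $\pi_1\circ\cdots\circ\pi_{m-1}\circ\Phi$ and restrict the result to the curve $y_m=\varphi(x_m)$. Writing $T_1:=\phi_1(x_m,\varphi(x_m))$ and $T_2:=\phi_2(x_m,\varphi(x_m))$, these are exactly the coordinates $(x_{m-1},y_{m-1})$ along $\Gamma_{m-1}$. It therefore suffices to show by downward induction on $k=m-1,\dots,0$ that
\[
x_k=\sum \alpha^{(k)}_{i,j}T_1^iT_2^j,\qquad y_k=\sum \beta^{(k)}_{i,j}T_1^iT_2^j
\]
along $\Gamma_k$. Each of these should be a polynomial in $(T_1,T_2)$ having a distinguished monomial $T_1^{r}T_2^{s}$ with nonzero coefficient, and every other monomial $T_1^iT_2^j$ should satisfy $i\ge r$, $j\ge s$, $(i,j)\ne(r,s)$. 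I read the ``$i>r_1,j>s_1$'' of \eqref{xxnyyn} in this dominance sense. A polynomial with this property has the distinguished monomial as its leading term as $x_m\to0^+$, since $T_1,T_2\to0$. The base case $k=m-1$ is immediate: $x_{m-1}=T_1$ and $y_{m-1}=T_2$, giving $(r,s)=(1,0)$ and $(0,1)$.

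The inductive step checks each of the four elementary maps. For ${\cal B}_v$ we have $x_{k-1}=x_k$ and $y_{k-1}=x_ky_k$. A product of two polynomials of the above type is again of that type, with exponent vector the sum of the leading exponents and leading coefficient the product. This holds because adding exponents preserves the componentwise order, and the leading monomial of the product arises only from leading times leading. ${\cal B}_h$ is handled symmetrically. For ${\cal T}_v$ we have $y_{k-1}=y_k+y_k^*$, which adds a constant. This breaks the form, since the constant $y_k^*\neq0$ becomes the leading term with $r=s=0$. However, the paper notes that ${\cal T}_v$ is always immediately followed (going down) by ${\cal B}_v$. So I treat the composite ${\cal B}_v\circ{\cal T}_v$ as one step: $x_{k-2}=x_{k-1}=x_k$ and $y_{k-2}=x_k(y_k+y^*)=x_ky_k+y^*x_k$. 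Here the leading monomial is $y^*\,T_1^{r}T_2^{s}$, the leading monomial of $x_k$, and all monomials of $x_ky_k$ dominate it componentwise, because $y_k$ has a leading exponent with $r+s\ge1$. The form is restored with nonzero leading coefficient, and ${\cal T}_h$ is handled likewise.

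Two points need care and are the main obstacle. First, a constant may appear at an intermediate stage, for instance just after a translation $\mathcal T$ or if $\Gamma_{m-1}$ ends at $(0,y^*)$. I would therefore carry a weaker inductive statement allowing a nonzero constant term, and show that after each ${\cal B}$ the form with $r+s\ge1$ is recovered. At the final stage $k=0$, $\Gamma$ tends to $O$, so no constant survives and $r_i+s_i\ge1$. Second, cancellation must be ruled out. Leading coefficients multiply under ${\cal B}$, and under ${\cal B}\circ{\cal T}$ the new leading coefficient is $y^*\alpha\ne0$, so no cancellation of the dominant term can occur. The whole argument is formal in $T_1,T_2$, which is needed in the formal (center manifold) case.
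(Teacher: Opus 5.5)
Your proposal is correct and follows essentially the same route as the paper. Both arguments run a downward induction from $x_{m-1}=T_1$, $y_{m-1}=T_2$, handle ${\cal B}_v$ and ${\cal B}_h$ by multiplying leading monomials, and absorb each translation into the blow-up that must follow it, so that the new leading coefficient is $y_k^*$ (resp.\ $x_k^*$) times that of $x_k$ (resp.\ $y_k$). Your componentwise-dominance reading of ``$i>r_1,j>s_1$'' is the right one, since already $x_{m-1}(y_{m-1}+y^*)=y^*T_1+T_1T_2$ violates the strict inequalities; this makes your bookkeeping slightly more accurate than the paper's notation, but the argument is the same.
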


{\bf Proof.}
Note that $\Gamma_m$ has the representation $y_m=\varphi(x_m)$,
where $\varphi$ is one of the forms \eqref{ynxn1}-\eqref{ynxn3}.
Then $\Gamma_{m-1}$ admits the parametric representation
\begin{equation*}
\left\{
\begin{array}{llll}
x_{m-1}=\phi_1(x_m,\varphi(x_m))=T_1,
\\
y_{m-1}=\phi_2(x_m,\varphi(x_m))=T_2,
\end{array}
\right.
\end{equation*}
the same form as \eqref{xxnyyn}.
Suppose that $\Gamma_k$, $k\in\{2,...m-1\}$, can be expressed as
\begin{equation*}
\left\{
\begin{array}{llll}
x_k=a_{i_1,j_1} T_1^{i_1} T_2^{j_1}+o(T_1^{i_1},T_2^{j_1}),
\\
y_k=b_{i_2,j_2} T_1^{i_2} T_2^{j_2}+o(T_1^{i_2},T_2^{j_2}),
\end{array}
\right.
\end{equation*}
the same form as \eqref{xxnyyn},
where $o(T_1^{i_1},T_2^{j_1})$ represents those terms
$\sum_{i> i_1,j> j_1} a_{i,j} T_1^i T_2^j$
for simplicity.
There are four cases:
{\bf(i)} $\pi_k={\cal B}_v$,
{\bf(ii)} $\pi_k={\cal B}_h$,
{\bf(iii)} $\pi_k={\cal T}_v~\mbox{and}~\pi_{k-1}={\cal B}_v$, and
{\bf(iv)} $\pi_k={\cal T}_h~\mbox{and}~\pi_{k-1}={\cal B}_h$.
In case {\bf(i)},
the orbit $\Gamma_{k-1}$ can be expressed as
\begin{equation*}
\left\{
\begin{array}{lllll}
x_{k-1}=x_k=a_{i_1,j_1}T_1^{i_1}T_2^{j_1}+o(T_1^{i_1},T_2^{j_1}),
\\
y_{k-1}=x_ky_k=a_{i_1,j_1}b_{i_2,j_2}T_1^{i_1+i_2}T_2^{j_1+j_2}
+o(T_1^{i_1+i_2},T_2^{j_1+j_2}),
\end{array}
\right.
\end{equation*}
the same form as \eqref{xxnyyn}.
Similarly,
in case {\bf(ii)},
the representation of the orbit $\Gamma_{k-1}$ is also of the same form as \eqref{xxnyyn}.
In case {\bf(iii)},
the orbit $\Gamma_{k-2}$ can be expressed as
\begin{equation*}
\left\{
\begin{array}{llll}
x_{k-2}=x_{k-1}=x_k=a_{i_1,j_1}T_1^{i_1}T_2^{j_1}+o(T_1^{i_1},T_2^{j_1}),
\\
y_{k-2}=x_{k-1}y_{k-1}=x_k(y_k+y_k^*)
=a_{i_1,j_1}y_k^*T_1^{i_1}T_2^{j_1}+o(T_1^{i_1},T_2^{j_1}),
\end{array}
\right.
\end{equation*}
the same form as \eqref{xxnyyn}.
Similarly,
in case {\bf(iv)},
the representation of the orbit $\Gamma_{k-2}$ is also of the same form as \eqref{xxnyyn}.
Since $\pi_1\in\{{\cal B}_v,{\cal B}_h\}$,
similar to the above,
the orbit $\Gamma=\Gamma_0$ has the parametric representation \eqref{xxnyyn}
and the proof is completed.
\qquad$\Box$

\begin{lm}
The function ${\cal X}$ given in Lemma~\ref{lm:par} satisfies one of the following cases:
\begin{description}

\item[(C1)]
${\cal X}(u)\in\mathbb{R}[\![u]\!]~\mbox{or}~\mathbb{R}\{u\}$,

\item[(C2)]
${\cal X}(u)\in\mathbb{R}\{u,u^\lambda\}$,

\item[(C3)]
${\cal X}(u)=c_k u^k+\sum_{i\ge k+1} u^i P_i(\ln u)$,

\item[(C4)]
${\cal X}(u)=u^k P_k(\ln u)+\sum_{i\ge k+1} u^i P_i(\ln u)$,
\end{description}
where $\lambda\in\mathbb{R}_+\setminus\mathbb{N}$,
$k$ is a positive integer, $c_k\ne0$, $P_i$\,s are polynomials and $\deg P_k\ge 1$.
\label{lm:XC1-4}
\end{lm}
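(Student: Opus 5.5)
The plan is a case split according to which of \eqref{ynxn1}--\eqref{ynxn3} gives $\varphi$, followed by substitution into the representation of Lemma~\ref{lm:par}. Write $u=x_m$. By Lemma~\ref{lm:par}, ${\cal X}(u)$ is a convergent (or formal) power series in $T_1=\phi_1(u,\varphi(u))$ and $T_2=\phi_2(u,\varphi(u))$, where $\phi_1,\phi_2$ are analytic (or formal) functions vanishing at the origin. Composing, ${\cal X}(u)=F(u,\varphi(u))$ for a single power series $F(u,v)=\sum_{a,b} f_{ab}u^a v^b$ with $F(0,0)=0$, convergent when $\Phi$ is analytic. It therefore suffices to understand $F(u,\varphi(u))$ in the three cases.

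\emph{Case \eqref{ynxn1}.} Here $\varphi\equiv0$, so ${\cal X}(u)=F(u,0)$. This lies in $\mathbb{R}[\![u]\!]$, or in $\mathbb{R}\{u\}$ when $\Phi$ is analytic, which is (C1).

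\emph{Case \eqref{ynxn2}.} Here $\Phi$ is analytic and $\varphi=cu^\lambda$, so ${\cal X}(u)=\sum f_{ab}c^b u^{a+b\lambda}$. Since $|cu^\lambda|\le|u|^{\lambda}$ for small $u>0$ up to a constant, this series converges. It defines an element of $\mathbb{R}\{u,u^\lambda\}$, namely $G(u,w)=F(u,cw)$ evaluated at $w=u^\lambda$, which is (C2).

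\emph{Case \eqref{ynxn3}.} Here $\Phi$ is analytic and $\varphi=u^p(c+b\ln u)$, so
\[
{\cal X}(u)=\sum_{a,b'} f_{ab'}u^{a+pb'}(c+b\ln u)^{b'}.
\]
Collecting by total power $i=a+pb'$, only finitely many pairs $(a,b')$ contribute to each $i$. Hence ${\cal X}(u)=\sum_{i\ge1}u^iP_i(\ln u)$ with polynomials $P_i(L)=\sum_{a+pb'=i}f_{ab'}(c+bL)^{b'}$, and convergence follows from $|u^p\ln u|\to0$. Now let $k$ be the least index with $P_k\not\equiv0$; such an index exists because ${\cal X}\not\equiv0$, since $x$ is nonconstant along the orbit. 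If $\deg P_k=0$, this is (C3) with $c_k=P_k\neq0$; otherwise it is (C4).

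The main obstacle I expect is twofold. The first is the convergence and well-definedness claim in Case \eqref{ynxn3}: one must make sure the regrouped series is a genuine asymptotic, indeed convergent, expansion and that the leading $P_k$ is nonzero. Cancellation inside $P_k$ is harmless, because I choose $k$ minimal with $P_k\not\equiv0$; nonvanishing of ${\cal X}$ follows from the fact that $\Gamma$ is the graph over $x>0$, so $x={\cal X}(u)$ is not identically zero. The second is the formal semi-hyperbolic (center manifold) subcase. There the composition $F$ is only formal, but since $\varphi\equiv0$ this still yields a formal series in $u$, which is exactly why (C1) allows $\mathbb{R}[\![u]\!]$.
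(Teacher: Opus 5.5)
Your proposal is correct. In the cases $\varphi\equiv0$ and $\varphi=cu^\lambda$ it is the paper's argument: substitute into $\phi_r$ and use that ${\cal X}$ is a polynomial in $T_1,T_2$.

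In the logarithmic case $\varphi(u)=u^p(c+b\ln u)$ you take a different route.

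\textbf{The paper's route.} It expands each $T_r(u)=\phi_r(u,\varphi(u))$ separately. It then uses the Jacobian condition $\mu^{(1)}_{1,0}\mu^{(2)}_{0,1}-\mu^{(2)}_{1,0}\mu^{(1)}_{0,1}\neq0$ to get $\min\{\ell_1,\ell_2\}=1$ and the leading shape of each $T_r$. Finally it asserts that substituting these into ${\cal X}$ ``yields (C3) or (C4)''.

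\textbf{Your route.} You compose everything into a single convergent series $F(u,v)$ and regroup $F(u,\varphi(u))$ by integer powers of $u$; each power gets only finitely many contributions since $p\ge1$. You then let $k$ be the least index with $P_k\not\equiv0$, which exists because ${\cal X}\not\equiv0$.

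\textbf{What each buys.} Your argument needs neither the Jacobian condition nor any knowledge of how $T_1,T_2$ enter ${\cal X}$. It also handles explicitly the possible cancellation among leading terms, which the paper's substitution step leaves implicit. Further, it covers $b=0$ uniformly. There the paper's claim $\deg Q^{(r)}_p\ge1$ actually fails, since $Q^{(r)}_p=\mu^{(r)}_{p,0}+c\,\mu^{(r)}_{0,1}$ is constant, although the lemma's conclusion still holds. In exchange, the paper's computation gives finer information on the leading terms of $T_1$ and $T_2$ individually, which the lemma as stated does not require.
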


\begin{rmk}
{\rm
The function ${\cal Y}$ given in Lemma~\ref{lm:par} also satisfies
one of the above 4 cases as ${\cal X}$.
Moreover,
if ${\cal X}$ satisfies {\bf(C1)} (or {\bf(C2)}),
then ${\cal Y}$ satisfies {\bf(C1)} (or {\bf(C2)}).
If ${\cal X}$ satisfies {\bf(C3)} (or {\bf(C4)}),
then ${\cal Y}$ satisfies either {\bf(C3)} or {\bf(C4)}.
}
\label{rmk:Y}
\end{rmk}

{\bf Proof.}
Note that $T_r(u)=\phi_r(u,\varphi(u))$ for $r=1,2$ and
$\varphi(u)$ takes one of the forms \eqref{ynxn1}-\eqref{ynxn3}.
When $\varphi(u)\equiv 0$,
we have $T_r(u)=\phi_r(u,0)\in\mathbb{R}[\![u]\!]$ or $\mathbb{R}\{u\}$
since $\phi_r(x,y)\in \mathbb{R}[\![x,y]\!]$ or $\mathbb{R}\{x,y\}$.
The fact that ${\cal X}$ is a polynomial in $T_1$ and $T_2$ yields that
${\cal X}(u)\in\mathbb{R}[\![u]\!]$ or $\mathbb{R}\{u\}$.
When $\varphi(u)=cu^\lambda$,
we have $\phi_r(x,y)\in \mathbb{R}\{x,y\}$ and therefore
$T_r(u)=\phi_r(u,cu^\lambda)\in \mathbb{R}\{u,u^\lambda\}$,
which implies that ${\cal X}(u)\in \mathbb{R}\{u,u^\lambda\}$ for the same reason as above.
So we obtain the first two possibilities {\bf(C1)} and {\bf(C2)}.

When $\varphi(u)=u^p(c+b\ln u)$,
we have $\phi_r(x,y)\in \mathbb{R}\{x,y\}$.
Assume that
$$
\phi_r(x,y)=\sum_{i+j\ge 1} \mu^{(r)}_{i,j} x^i y^j
$$
and let $\ell_r:=\min\{i:\mu^{(r)}_{i,0}\ne 0\}$ for $r=1,2$.
Since $\Phi=(\phi_1,\phi_2)$ is a coordinates change,
we have $\mu^{(1)}_{1,0}\mu^{(2)}_{0,1}-\mu^{(2)}_{1,0}\mu^{(1)}_{0,1}\ne 0$,
which implies that $\min\{\ell_1,\ell_2\}=1$.
One can compute that for $r=1,2$,
\begin{align*}
T_r(u)=
\left\{
\begin{array}{llll}
\mu^{(r)}_{\ell_r,0} u^{\ell_r}+\sum_{i>\ell_r} u^i Q_i^{(r)}(\ln u)
&\mbox{if}~1\le \ell_r\le p-1,
\\
u^pQ^{(r)}_p(\ln u)+\sum_{i>p} u^i Q_i^{(r)}(\ln u)
&\mbox{if}~\ell_r\ge p,
\end{array}
\right.
\end{align*}
where $Q^{(r)}_i$\,s are polynomials and $\deg Q^{(r)}_p\ge 1$.
Substituting the above expressions of $T_1(u)$ and $T_2(u)$ into
the expression \eqref{xxnyyn} of ${\cal X}$,
we obtain that the expansion of ${\cal X}$ satisfies {\bf(C3)} or {\bf(C4)}.
Thus the proof of Lemma~\ref{lm:XC1-4} is completed.
\qquad$\Box$

\subsection{Asymptotic expansion of the inverse function}

\begin{lm}
The asymptotic expansion of the inverse function ${\cal X}^{-1}(x)$
satisfies one of the three cases given in Theorem~\ref{thm:O}.
\label{lm:inverse}
\end{lm}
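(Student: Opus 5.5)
We treat the four cases of Lemma~\ref{lm:XC1-4} separately. In each, we invert $x={\cal X}(u)$ for the leading behaviour of $u$, substitute into ${\cal Y}(u)$, and check that the result lies in one of the three classes of Theorem~\ref{thm:O}. Following the lemma, we concentrate on the asymptotic expansion of $u={\cal X}^{-1}(x)$; composing with ${\cal Y}$ then follows the same pattern, using Remark~\ref{rmk:Y}.

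\textbf{Case (C1).} Write ${\cal X}(u)=c_ku^k(1+h(u))$ with $h\in u\mathbb{R}[\![u]\!]$. Then $x^{1/k}=c_k^{1/k}u(1+h)^{1/k}$, where the right-hand side is a formal (or convergent) series in $u$ with nonzero linear term. The formal inverse function theorem gives $u\in\mathbb{R}[\![x^{1/k}]\!]$, which is case (i).

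\textbf{Case (C2).} Write ${\cal X}(u)=F(u,u^\lambda)$ with $F$ convergent, and let $\mu$ be the leading exponent. The equation $x^{1/\mu}=u\,G(u,u^\lambda)$ with $G(0,0)\neq0$ can be solved by fixed-point iteration. The solution is a convergent series in monomials $x^{a/\mu}$, whose exponents lie in the additive semigroup generated by $1/\mu$ and $\lambda/\mu-1/\mu,\dots$. All such exponents are positive, and only finitely many lie below any bound, so they can be arranged as a strictly increasing sequence tending to infinity. This is case (ii). Substituting into ${\cal Y}\in\mathbb{R}\{u,u^\lambda\}$ keeps us inside this class.

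\textbf{Case (C3).} Set $x=c_ku^k(1+R(u))$, where $R=\sum_{i\ge1}u^iP_{k+i}(\ln u)$. Putting $v=(x/c_k)^{1/k}$ gives $u=v(1+R)^{-1/k}$. Since $\ln u=\ln v+O(u|\ln u|^N)$, iterating $u\mapsto v(1+R(u))^{-1/k}$ and expanding $\ln u=\ln v-\tfrac1k\ln(1+R)$ produces $u=\sum_i v^iQ_i(\ln v)$ with polynomial $Q_i$. Indeed, every correction term of order $v^i$ carries only finitely many powers of $\ln v$, because each factor $u^i$ brings at most $\deg P$ logarithms. Since $\ln v=\tfrac1k\ln x+$const, this is case (iii) with $j=0$.

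\textbf{Case (C4).} This is the main obstacle. Here $x=u^kP_k(\ln u)(1+\cdots)$ with $\deg P_k=d\ge1$, so $x\sim a\,u^k(\ln u)^d$. Taking logarithms gives $\ln x=k\ln u+d\ln|\ln u|+O(1)$, and hence $\ln u=\tfrac1k\ln x\,(1+O(\ln|\ln x|/\ln x))$. The leading solution is therefore
\[
u\approx(x/a)^{1/k}(-k\ln u)^{-d/k}\approx x^{1/k}\boldsymbol{\ell}^{d/k}\cdot\text{const}.
\]
This is exactly why the variable $\boldsymbol{\ell}=-1/\ln x$ and the fractional powers $\boldsymbol{\ell}^{j/n}$ appear.

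To carry out the inversion, we write $u=x^{1/k}\boldsymbol{\ell}^{d/k}w$ and substitute. Then
\[
\ln u=\tfrac1k\ln x+\tfrac dk\ln\boldsymbol{\ell}+\ln w=-\tfrac1{k\boldsymbol{\ell}}\bigl(1-d\boldsymbol{\ell}\ln\boldsymbol{\ell}-k\boldsymbol{\ell}\ln w\bigr).
\]
Consequently $P_k(\ln u)\boldsymbol{\ell}^{d}$ is a polynomial in $\boldsymbol{\ell}$, $\boldsymbol{\ell}\ln\boldsymbol{\ell}$ and $\boldsymbol{\ell}\ln w$, with nonzero constant term. The equation for $w$ becomes
\[
w^k\Phi(\boldsymbol{\ell},\boldsymbol{\ell}\ln\boldsymbol{\ell},\boldsymbol{\ell}\ln w)+(\text{terms with }x^{i/k},\ i\ge1)=\text{const}.
\]
We would solve this by the implicit function theorem in the auxiliary variables $(\boldsymbol{\ell},\,\boldsymbol{\ell}\ln\boldsymbol{\ell},\,x^{1/k},\,x^{1/k}\ln x)$, treated as independent small quantities. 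This gives $w$ as a series in $\boldsymbol{\ell}$ and $x^{1/k}$ with coefficients polynomial in $\ln\boldsymbol{\ell}$ and $\ln x$. Multiplying by $x^{1/k}\boldsymbol{\ell}^{d/k}$ yields exactly the form (iii) with $n=k$.

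The delicate point is to check that, at each fixed order $x^{i/n}\boldsymbol{\ell}^{j/n}$, only finitely many powers of $\ln x$ and $\ln\boldsymbol{\ell}$ occur, so that the $P_{ij}$ are polynomials. We would verify this by induction on $(i,j)$, tracking degrees through the iteration.

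Finally, we substitute into ${\cal Y}$, which is of type (C3) or (C4) by Remark~\ref{rmk:Y}. Products and powers of expansions of the form (iii) remain of that form, because $\ln u$ is again expressed through $\ln x$, $\ln\boldsymbol{\ell}$ and a series in $\boldsymbol{\ell}$, $\boldsymbol{\ell}\ln\boldsymbol{\ell}$.
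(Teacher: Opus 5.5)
Your proposal is correct and follows the paper's proof: it uses the same case split (C1)--(C4), and it handles (C3) and (C4) by the same rescalings $u=x^{1/k}(w_*+w)$ and $u=x^{1/k}\boldsymbol{\ell}^{\rho_k/k}(\mathrm{const}+\cdots)$ followed by an implicit-function/fixed-point argument, while your formal inverse function theorem in (C1) and single semigroup argument in (C2) are lighter versions of the paper's Newton--Puiseux step and its rational/irrational split. There are two minor slips that do not affect the conclusion: in (C2) the factor $G$ is a series in the powers $u^{\delta_k-\mu}$ (such as $u^{\lambda-1}$) rather than an analytic function of $(u,u^\lambda)$, and in (C4) the terms with $i>k$ carry factors $(x^{1/k}\boldsymbol{\ell}^{\rho_k/k})^{i-k}$ together with extra powers of $\ln x$, so your list of auxiliary variables must be enlarged accordingly, which still lands in form (iii).
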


{\bf Proof.}
In case \textbf{(C1)} given in Lemma~\ref{lm:XC1-4},
the equation
\begin{align}
x-{\cal X}(u)=0
\label{equ:xX}
\end{align}
is a formal or analytic equation.
By the Newton-Puiseux Theorem (\cite[Theorem 4.2.7]{KP02}),
the solution $u={\cal X}^{-1}(x)$ is a formal or convergent Puiseux series,
i.e., ${\cal X}^{-1}(x)\in\mathbb{R}[\![x^{\frac{1}{n}}]\!]$ or
$\mathbb{R}\{x^{\frac{1}{n}}\}$
for some positive integer $n$,
which satisfies case {\bf(i)} or {\bf(ii)} of Theorem~\ref{thm:O}.

In case \textbf{(C2)},
the function ${\cal X}(u)$ can be written as
$$
{\cal X}(u)=\sum_{i+j\ge 1} \gamma_{i,j} u^i (u^\lambda)^j
$$
and let $\Delta({\cal X}):=\{(i,j)\in\mathbb{R}_+^2:\gamma_{i,j}\ne 0\}$.
For the rational subcase,
i.e., $\lambda=p/q$ for a pair of coprime positive integers $p$ and $q$,
we let $w=u^{1/q}$ and then equation \eqref{equ:xX} becomes
$$
x-{\cal X}(w^q)=x-\sum_{(i,j)\in\Delta({\cal X})} \gamma_{i,j}w^{iq+jp}=0,
$$
an analytic equation in $x$ and $w$.
Further the Newton-Puiseux Theorem (\cite[Theorem 4.2.7]{KP02})
yields that the solution $w=W(x)$ of the above equation is a convergent Puiseux series
and therefore the solution $u=w^q=(W(x))^q$ of equation \eqref{equ:xX}
is also a convergent Puiseux series,
which satisfies case {\bf(ii)} of Theorem~\ref{thm:O}.

For the irrational case,
we have $i+\lambda j\ne i'+\lambda j'$ for any two distinct lattice points $(i,j)$ and $(i',j')$.
So lattice points in the set $\Delta({\cal X})$ can be arranged into a sequence
$((i_1,j_1),(i_2,j_2),\cdots)$ such that
$
i_1+\lambda j_1<i_2+\lambda j_2<i_3+\lambda j_3<\cdots.
$
Then equation \eqref{equ:xX} can be rewritten as
$$
x-{\cal X}(u)=x-\sum_{k\ge 1}\gamma_{i_k,j_k} u^{\delta_k}=0,
$$
where $\delta_k:=i_k+\lambda j_k$.
Substituting the variable change $u=x^{1/\delta_1}(z_*+z)$ with
$z_*:=(\gamma_{i_1,j_1})^{-1/\delta_1}$
in the above equation,
we obtain that
\begin{align}
0=x-\sum_{k\ge 1}\gamma_{i_k,j_k}\{x^{1/\delta_1}(z_*+z)\}^{\delta_k}
=xF(x,z),
\end{align}
where
$$
F(x,z):=1-\gamma_{i_1,j_1}(z_*+z)^{\delta_1}
-\sum_{k\ge 2}\gamma_{i_k,j_k}x^{\delta_k/\delta_1-1}(z_*+z)^{\delta_k}.
$$
Note that
$F(0,0)=1-\gamma_{i_1,j_1}z_*^{\delta_1}=0$ and
$F_z(0,0)=-\delta_1 (\gamma_{i_1,j_1})^{1/\delta_1}\ne 0$.
By the Implicit Function Theorem and the method of undetermined coefficients,
the equation $F(x,z)=0$ has the solution of the form
$$
z=Z(x)=e_1x^{\mu_1}+e_2x^{\mu_2}+\cdots+e_k x^{\mu_k}+\cdots,
$$
where $(\mu_1,\mu_2,\dots,\mu_k,\cdots)$ is a strictly increasing sequence of positive real numbers tending to infinity.
Then the solution
\begin{align}
u={\cal X}^{-1}(x)
=x^{1/\delta_1}(z_*+Z(x))
=x^{1/\delta_1}(z_*+\sum_{k\ge 1}e_k x^{\mu_k})
\label{C2X-1}
\end{align}
of equation \eqref{equ:xX} satisfies case {\bf(ii)} of Theorem~\ref{thm:O}.

In case \textbf{(C3)},
equation \eqref{equ:xX} is of the form
$$
x-{\cal X}(u)=x-c_k u^k-\sum_{i\ge k+1} u^i P_i(\ln u)=0.
$$
We substitute $x=X^k$ and $u=X(w_*+w)$ with $w_*:=(c_k)^{-1/k}$
in the above equation and then it becomes $X^kG(w,X)=0$,
where
\begin{align*}
G(w,X)&:=1-c_k(w_*+w)^k-\sum_{i\ge k+1} X^{i-k} (w_*+w)^i P_i\big(\ln X+\ln(w_*+w)\big)
\\
&=-c_k\left(kw_*^{k-1}w+\cdots+w^k\right)+\sum_{i\ge 0, j\ge 1}w^i X^j {\cal P}_{i,j}(\ln X)
\end{align*}
for some polynomials ${\cal P}_{i,j}$\,s.
Note that $G(0,0)=0$ and $G_w(0,0)=-k(c_k)^{1/k}\ne 0$.
By the Implicit Function Theorem and the method of undetermined coefficients,
the equation $G(w,X)=0$ has the solution of the form
$$
w=W(X)=\sum_{s\ge 1}X^s Q_s(\ln X)
$$
for some polynomial $Q_s$\,s.
It follows that the solution
\begin{align}
u={\cal X}^{-1}(x)=x^{\frac{1}{k}}\left(w_*+W(x^{\frac{1}{k}})\right)
=w_*x^{\frac{1}{k}}+\sum_{s\ge 1}x^{\frac{s+1}{k}}
Q_s\left(\frac{\ln x}{k}\right)
\label{C3X-1}
\end{align}
of equation \eqref{equ:xX}
satisfies case {\bf(iii)} of Theorem~\ref{thm:O}.

In case \textbf{(C4)},
equation \eqref{equ:xX} is of the form
$$
x-{\cal X}(u)=x-u^k P_k(\ln u)-\sum_{i\ge k+1} u^i P_i(\ln u)=0.
$$
Assume that $\deg P_i=\rho_i$ and
$P_i(\omega):=e_{i,\rho_i}\omega^{\rho_i}+\cdots+e_{i,1}\omega+e_{i,0}$
for all $i\ge k$.
Under the variable changes $x=X^k$ and $u=X\Phi(X)(u_0+Z)$ with
$$
\Phi(X):=(-\ln X)^{-\frac{\rho_k}{k}}~~~\mbox{and}~~~
u_0:=\left((-1)^{\rho_k}e_{k,\rho_k}\right)^{-\frac{1}{k}},
$$
the above equation becomes $X^kH(Z,X,\Phi)=0$,
where $H(Z,X,\Phi):=1-\{I\}-\{II\}$,
\begin{align*}
\{I\}&:=\Phi^k(u_0+Z)^kP_k\big(\ln X+\ln \Phi+\ln(u_0+Z)\big),
\\
\{II\}&:=\sum_{i\ge k+1}X^{i-k}\Phi^i(u_0+Z)^iP_i\big(\ln X+\ln \Phi+\ln(u_0+Z)\big).
\end{align*}
One can compute that
\begin{align*}
\{I\}=\Phi^k\sum_{j\ge 0} Q_{k,j}(\ln X,\ln \Phi)Z^j,
\end{align*}
where $Q_{k,j}$\,s are all polynomials with
$\deg Q_{k,j}\le\rho_k$ for all $j\ge 0$
and moreover
\begin{align*}
Q_{k,0}(\ln X,\ln \Phi)
&=u_0^kP_k\big(\ln X\!+\!\ln \Phi\!+\!\ln(u_0)\big)
\\
&=u_0^ke_{k,\gamma_k}(\ln X)^{\gamma_k}+R_0(\ln X,\ln \Phi)
\\
&=(\ln X)^{\gamma_k}+R_0(\ln X,\ln \Phi),
\\
Q_{k,1}(\ln X,\ln \Phi)&=u_0^{k-1}P'_k\big(\ln X\!+\!\ln \Phi\!+\!\ln(u_0)\big)
+ku_0^{k-1}P_k\big(\ln X\!+\!\ln \Phi\!+\!\ln(u_0)\big)
\\
&=ku_0^{k-1}e_{k,\rho_k}(\ln X)^{\rho_k}+R_1(\ln X,\ln \Phi)
\\
&=\frac{k}{u_0}(\ln X)^{\rho_k}+R_1(\ln X,\ln \Phi),
\end{align*}
and those remainder parts
$R_0(\ln X,\ln \Phi)$ and $R_1(\ln X,\ln \Phi)$
satisfy that
\begin{align*}
\lim_{X\to 0^+}\Phi^k R_0(\ln X,\ln \Phi)
=\lim_{X\to 0^+}\Phi^k R_1(\ln X,\ln \Phi)
=0.
\end{align*}
It follows that
\begin{align*}
\{I\}=1&+\Phi^k R_0(\ln X,\ln \Phi)
+\left((-1)^{\rho_k}\frac{k}{u_0}+\Phi^k R_1(\ln X,\ln \Phi)\right) Z
\\
&+\sum_{j\ge 2} \Phi^k Z^j Q_{k,j}(\ln X,\ln \Phi).
\end{align*}
Similarly,
one can compute that
\begin{align*}
\{II\}=\sum_{i\ge k+1,j\ge 0} X^{i-k} \Phi^i Z^j Q_{i,j}(\ln X,\ln \Phi),
\end{align*}
where $Q_{i,j}$\,s are some polynomials.
Consequently,
$H(Z,X,\Phi)$ can be rewritten as
\begin{align*}
H(Z,X,\Phi)
=1-\{I\}-\{II\}
=(-1)^{\rho_k+1}\frac{k}{u_0}Z
-\sum_{i\ge k,j\ge 0} X^{i-k} \Phi^i Z^j \widetilde{Q}_{i,j}(\ln X,\ln \Phi),
\end{align*}
where
$\widetilde{Q}_{k,0}:=R_0(\ln X,\ln \Phi)$,
$\widetilde{Q}_{k,1}:=R_1(\ln X,\ln \Phi)$,
and $\widetilde{Q}_{i,j}:=Q_{i,j}(\ln X,\ln \Phi)$ for all other $i$ and $j$.
By the Implicit Function Theorem and the method of undetermined coefficients,
we find that the equation $H(Z,X,\Phi)=0$ has the solution of the form
$$
Z=Z(X,\Phi)=\sum_{i+j\ge 1} X^i \Phi^j {\cal Q}_{i,j}(\ln X,\ln \Phi)
$$
for some polynomial $P_{i,j}$\,s.
Thus equation \eqref{equ:xX} has the solution of the form
\begin{align}
u={\cal X}^{-1}(x)
&=x^{\frac{1}{k}}\Phi(x^{\frac{1}{k}})(u_0+Z(x^{\frac{1}{k}},\Phi(x^{\frac{1}{k}})))
\nonumber
\\
&=x^{\frac{1}{k}}\boldsymbol{\ell}^{\frac{\rho_k}{k}}
\left(\tilde{u}_0
+\sum_{i+j\ge 1} x^{\frac{i}{k}} \boldsymbol{\ell}^{\frac{\rho_k j}{k}}
\widetilde{{\cal Q}}_{i,j}(\ln x,\ln \boldsymbol{\ell})\right),
\label{C4X-1}
\end{align}
where $\tilde{u}_0$ is a nonzero constant,
$\widetilde{{\cal Q}}_{i,j}$\,s are polynomials and $\boldsymbol{\ell}:=-1/\ln x$.
Consequently,
the proof of Lemma~\ref{lm:inverse} is completed.
\qquad$\Box$

\subsection{Proof of Theorem~\ref{thm:O}}

{\bf Proof.}
We see from
the parametric representation \eqref{xxnyyn} that
the characteristic orbit $\Gamma$ has the explicit expression
$
y=y(x):={\cal Y}\circ{\cal X}^{-1}(x).
$
By Remark~\ref{rmk:Y},
we only need to consider the following 6 cases:
{\bf(S1)}   both ${\cal X}$ and ${\cal Y}$ satisfy {\bf(C1)},
{\bf(S2)}  both ${\cal X}$ and ${\cal Y}$ satisfy {\bf(C2)},
{\bf(S3)} both ${\cal X}$ and ${\cal Y}$ satisfy {\bf(C3)},
{\bf(S4)}  both ${\cal X}$ and ${\cal Y}$ satisfy {\bf(C4)},
{\bf(S5)}   ${\cal X}$ satisfies {\bf(C3)} but ${\cal Y}$ satisfies {\bf(C4)}, and
{\bf(S6)}  ${\cal X}$ satisfies {\bf(C4)} but ${\cal Y}$ satisfies {\bf(C3)}.

In case {\bf(S1)},
${\cal Y}(u)\in \mathbb{R}[\![u]\!]$ (or $\mathbb{R}\{u\}$)
and ${\cal X}^{-1}(x)\in \mathbb{R}[\![x^{1/n}]\!]$ (or $\mathbb{R}\{x^{1/n}\}$).
Thus
$y(x)={\cal Y}\circ{\cal X}^{-1}(x) \in \mathbb{R}[\![x^{1/n}]\!]$ (or $\mathbb{R}\{x^{1/n}\}$)
and satisfies {\bf(i)} (or {\bf(ii)}) of Theorem~\ref{thm:O}.
In case {\bf(S2)},
if $\lambda$ is rational,
then both ${\cal Y}(u)$ and ${\cal X}^{-1}(x)$ are convergent Puiseux series
and therefore $y(x)$ is also a convergent Puiseux series,
which satisfies {\bf(ii)} of Theorem~\ref{thm:O}.
On the contrary,
if $\lambda$ is irrational,
then ${\cal Y}(u)\in\mathbb{R}\{u,u^\lambda\}$ and
${\cal X}^{-1}(x)$ has the expansion \eqref{C2X-1}.
We see from \eqref{C2X-1} that $\left({\cal X}^{-1}(x)\right)^{i+\lambda j}$
is a series of the form given in {\bf(ii)} of Theorem~\ref{thm:O} for all $i+j\ge 1$.
Hence the expansion of $y(x)={\cal Y}\circ{\cal X}^{-1}(x)$
satisfies {\bf(ii)} of Theorem~\ref{thm:O}.
In case {\bf(S3)}, we have
$$
{\cal Y}(u)=c_su^s+\sum_{i\ge s+1} u^i P_i(\ln u)
$$
and ${\cal X}^{-1}(x)$ has the expansion \eqref{C3X-1}.
Note that
$$
\ln {\cal X}^{-1}(x)
=\frac{1}{k}\ln x+\ln\left(w_*+\sum_{s\ge 1}x^{\frac{s}{k}}
Q_s\left(\frac{\ln x}{k}\right)\right).
$$
Then $(u^i P_i(\ln u))|_{u={\cal X}^{-1}(x)}$
is a convergent fractional power series in $x$ with polynomial coefficients in $\ln x$
for all $i\ge s+1$
and therefore the expansion of $y(x)={\cal Y}\circ{\cal X}^{-1}(x)$ is also such a series,
which satisfies {\bf(iii)} of Theorem~\ref{thm:O}.
In case {\bf(S4)}, we have
$$
{\cal Y}(u)=u^s P_s(\ln u)+\sum_{i\ge s+1} u^i P_i(\ln u)
$$
and ${\cal X}^{-1}(x)$ has the expansion \eqref{C4X-1}.
We see from \eqref{C4X-1} that
\begin{align*}
\ln{\cal X}^{-1}(x)
=\frac{1}{k}\ln x+\frac{\rho_k}{k}\ln\boldsymbol{\ell}+\ln\tilde{u}_0
+\sum_{i+j\ge 1} x^{\frac{i}{k}} \boldsymbol{\ell}^{\frac{\rho_k j}{k}}
{\cal P}_{i,j}(\ln x,\ln \boldsymbol{\ell}).
\end{align*}
Then the expansion of $(u^i P_i(\ln u))|_{u={\cal X}^{-1}(x)}$
satisfies {\bf(iii)} of Theorem~\ref{thm:O} for all $i\ge s$
and therefore
the expansion of $y(x)={\cal Y}\circ{\cal X}(x)$ also
satisfies {\bf(iii)} of Theorem~\ref{thm:O}.
Similarly,
we can prove that expansions of $y(x)$ in cases {\bf(S5)} and {\bf(S6)}
both satisfy {\bf(iii)} of Theorem~\ref{thm:O}.
This completes the proof of Theorem~\ref{thm:O}.
\qquad$\Box$


\end{document}